\newtheorem{theorem}{Theorem}[section]
\newtheorem{corollary}[theorem]{Corollary}
\theoremstyle{definition}
\newtheorem{definition}[theorem]{Definition}
\newtheorem{remark}[theorem]{Remark}
\numberwithin{equation}{section} \subjclass[2010]{30C45}
\begin{document}
\keywords{Lucas polynomials, bi-univalent functions, analytic functions,
Fekete-Szeg\"{o} problem.}
\title[$\left(  p,q\right)  -$LUCAS POLYNOMIALS ... ]{FEKETE-SZEG\"{O} INEQUALITY FOR ANALYTIC AND BI-UNIVALENT FUNCTIONS
SUBORDINATE TO $\left(  p,q\right)  -$LUCAS POLYNOMIALS}
\author{ALA AMOURAH}
\address{ALA AMOURAH: Department of Mathematics, Faculty of Science and Technology,
Irbid National University, Irbid, Jordan.}
\email{alaammour@yahoo.com.}

\begin{abstract}
In the present paper, a subclass of analytic and bi-univalent functions by
means of $\left(  p,q\right)  -$ Lucas polynomials is introduced. Certain
coefficients bounds for functions belonging to this subclass are obtained.
Furthermore, the Fekete-Szeg\"{o} problem for this subclass is solved.

\end{abstract}
\maketitle

\section{Introduction}

Let $\mathcal{A}$ denote the class of all analytic functions $f$ defined in
the open unit disk $\mathbb{U}=\{z\in\mathbb{C}:\left\vert z\right\vert <1\}$
and normalized by the conditions $f(0)=0$ and $f^{\prime}(0)=1$. Thus each
$f\in\mathcal{A}$ has a Taylor-Maclaurin series expansion of the form:%

\begin{equation}
f(z)=z+\sum\limits_{n=2}^{\infty}a_{n}z^{n},\ \ (z\in\mathbb{U}).\text{
\ \ \ \ } \label{ieq1}%
\end{equation}

Further, let $\mathcal{S}$ denote the class of all functions $f\in\mathcal{A}$
which are univalent in $\mathbb{U}$ (for details, see \cite{Duren}; see also
some of the recent investigations \cite{C1,C4,C5,C2}).

Two of the important and well-investigated subclasses of the analytic and
univalent function class $\mathcal{S}$ are the class $\mathcal{S}^{\ast
}(\alpha)$ of starlike functions of order $\alpha$ in $\mathbb{U}$ and the
class $\mathcal{K}(\alpha)$ of convex functions of order $\alpha$ in
$\mathbb{U}$. By definition, we have%

\begin{equation}
\mathcal{S}^{\ast}(\alpha):=\left\{  f:\ f\in\mathcal{S}\ \ \text{and}%
\ \ \mbox{Re}\left\{  \frac{zf^{\prime}(z)}{f(z)}\right\}  >\alpha,\quad
(z\in\mathbb{U};0\leq\alpha<1)\right\}  , \label{d1}%
\end{equation}
and%
\begin{equation}
\mathcal{K}(\alpha):=\left\{  f:\ f\in\mathcal{S}\ \ \text{and}%
\ \ \mbox{Re}\left\{  1+\frac{zf^{\prime\prime}(z)}{f^{\prime}(z)}\right\}
>\alpha,\quad(z\in\mathbb{U};0\leq\alpha<1)\right\}  . \label{d2}%
\end{equation}

It is clear from the definitions (\ref{d1}) and (\ref{d2}) that $\mathcal{K}%
(\alpha)\subset\mathcal{S}^{\ast}(\alpha)$. Also we have%

\[
f(z)\in\mathcal{K}(\alpha)\ \ \text{iff}\ \ zf^{\prime}(z)\in\mathcal{S}%
^{\ast}(\alpha),
\]
and%
\[
f(z)\in\mathcal{S}^{\ast}(\alpha)\ \ \text{iff}\ \ \int_{0}^{z}\frac{f(t)}%
{t}dt=F(z)\in\mathcal{K}(\alpha).
\]

It is well-known that, if $f(z)$ is an univalent analytic function from a
domain $\mathbb{D}_{1}$ onto a domain $\mathbb{D}_{2}$, then the inverse
function $g(z)$ defined by%
\[
g\left(  f(z)\right)  =z,\ \ (z\in\mathbb{D}_{1}),
\]
is an analytic and univalent mapping from $\mathbb{D}_{2}$ to $\mathbb{D}_{1}%
$. Moreover, by the familiar Koebe one-quarter theorem (for details, see
\cite{Duren}), we know that the image of $\mathbb{U}$ under every function
$f\in\mathcal{S}$ contains a disk of radius $\frac{1}{4}$.

According to this, every function $f\in\mathcal{S}$ has an inverse map
$f^{-1}$ that satisfies the following conditions:

\begin{center}
$f^{-1}(f(z))=z \ \ \ (z\in\mathbb{U}),$
\end{center}

and

\begin{center}
$f\left(  f^{-1}(w)\right)  =w$ $\ \ \ \left(  |w|<r_{0}(f);r_{0}(f)\geq
\frac{1}{4}\right)  $.
\end{center}

In fact, the inverse function is given by%
\begin{equation}
f^{-1}(w)=w-a_{2}w^{2}+(2a_{2}^{2}-a_{3})w^{3}-(5a_{2}^{3}-5a_{2}a_{3}%
+a_{4})w^{4}+\cdots. \label{ieq2}%
\end{equation}

A function $f\in\mathcal{A}$ is said to be bi-univalent in $\mathbb{U}$ if
both $f(z)$ and $f^{-1}(z)$ are univalent in $\mathbb{U}$. Let $\Sigma$ denote
the class of bi-univalent functions in $\mathbb{U}$ given by (\ref{ieq1}).
Examples of functions in the class $\Sigma$ are%
\[
\frac{z}{1-z},\ -\log(1-z),\ \frac{1}{2}\log\left(  \frac{1+z}{1-z}\right)
,\cdots.
\]

It is worth noting that the familiar Koebe function is not a member of
$\Sigma$, since it maps the unit disk $\mathbb{U}$ univalently onto the entire
complex plane except the part of the negative real axis from $-1/4$ to
$-\infty$. Thus, clearly, the image of the domain does not contain the unit
disk $\mathbb{U}$. For a brief history and some intriguing examples of
functions and characterization of the class $\Sigma$, see Srivastava et al.
\cite{C27} and Yousef et al. \cite{C3, C33, C333}.

In 1967, Lewin \cite{C21} investigated the bi-univalent function class
$\Sigma$ and showed that $|a_{2}|<1.51$. Subsequently, Brannan and Clunie
\cite{C22} conjectured that $|a_{2}|\leq\sqrt{2}.$ On the other hand,
Netanyahu \cite{C23} showed that $\underset{f\in\Sigma}{\max}$ $|a_{2}%
|=\frac{4}{3}.$ The best known estimate for functions in $\Sigma$ has been
obtained in 1984 by Tan \cite{C24}, that is, $|a_{2}|<1.485$. The coefficient
estimate problem for each of the following Taylor-Maclaurin coefficients
$|a_{n}|$ $(n\in\mathbb{N}\backslash\{1,2\})$ for each $f\in\Sigma$ given by
(\ref{ieq1}) is presumably still an open problem.

\bigskip For the polynomials $p(x)$ and $q(x)$ with real coefficients, the
$\left(  p,q\right)  -$Lucas polynomials $L_{p,q,k}(x)$ are defined by the
following recurrence relation (see \cite{bul}):%

\[
L_{p,q,k}(x)=p(x)L_{p,q,k-1}(x)+q(x)L_{p,q,k-2}(x),\ \ (k\geq2),
\]
with%
\begin{equation}
L_{p,q,0}(x)=2,\text{ }L_{p,q,1}(x)=p(x)\text{, and }L_{p,q,2}(x)=p^{2}%
(x)+2q(x). \label{w1}%
\end{equation}

The generating function of the $\left(  p,q\right)  -$Lucas Polynomials
$L_{p,q,k}(x)$ (see \cite{8}) is given by%
\[
A_{\left\{  L_{p,q,k}(x)\right\}  }(z)=\sum\limits_{k=2}^{\infty}%
L_{p,q,k}(x)z^{k}=\frac{2-p(x)z}{1-p(x)z-q(x)z^{2}}.
\]

The concept of $\left(  p,q\right)  -$Lucas polynomials was introduced by
Alt\i nkaya and Yal\c{c}\i n \cite{9}.

\section{\bigskip The class $\mathfrak{B}_{\Sigma}^{\mu}(\alpha,\lambda
,\delta)$}

Firstly, we consider a comprehensive class of analytic bi-univalent functions
introduced and studied by Yousef et al. \cite{class} defined as follows:

\begin{definition}
\label{def22} (See \cite{class}) For $\lambda\geq1,$ $\mu\geq0,$ $\delta\geq0$
and $0\leq\alpha<1$, a function $f\in\Sigma$ given by (\ref{ieq1}) is said to
be in the class $\mathfrak{B}_{\Sigma}^{\mu}(\alpha,\lambda,\delta)$ if the
following conditions hold for all $z,w\in\mathbb{U}$:
\begin{equation}
\mbox{Re}\left(  (1-\lambda)\left(  \frac{f(z)}{z}\right)  ^{\mu}+\lambda
f^{\prime}(z)\left(  \frac{f(z)}{z}\right)  ^{\mu-1}+\xi\delta zf^{\prime
\prime}(z)\right)  >\alpha\label{ieq23}%
\end{equation}
and%
\begin{equation}
\mbox{Re}\left(  (1-\lambda)\left(  \frac{g(w)}{w}\right)  ^{\mu}+\lambda
g^{\prime}(w)\left(  \frac{g(w)}{w}\right)  ^{\mu-1}+\xi\delta wg^{\prime
\prime}(w)\right)  >\alpha, \label{ieq24}%
\end{equation}
where the function $g(w)=f^{-1}(w)$ is defined by (\ref{ieq2}) and $\xi
=\frac{2\lambda+\mu}{2\lambda+1}$.
\end{definition}

\begin{remark}
\label{rem2} In the following special cases of Definition \ref{def22}; we show
how the class of analytic bi-univalent functions $\mathfrak{B}_{\Sigma}^{\mu
}(\alpha,\lambda,\delta)$ for suitable choices of $\lambda$, $\mu$ and
$\delta$ lead to certain new as well as known classes of analytic bi-univalent
functions studied earlier in the literature.

(i) For $\delta=0,$ we obtain the bi-univalent function class $\mathfrak{B}%
_{\Sigma}^{\mu}(\alpha,\lambda,0):=\mathfrak{B}_{\Sigma}^{\mu}(\alpha
,\lambda)$ introduced by \c{C}a\u{g}lar et al. \cite{C29}. \vspace{0.05in}

(iii) For $\delta=0$, $\mu=1$, and $\lambda=1,$ we obtain the bi-univalent
function class $\mathfrak{B}_{\Sigma}^{1}(\alpha,1,0):=\mathfrak{B}_{\Sigma
}(\alpha)$ introduced by Srivastava et al. \cite{C27}. \vspace{0.05in}

(iv) For $\delta=0$, $\mu=0$, and $\lambda=1,$ we obtain the well-known class
$\mathfrak{B}_{\Sigma}^{0}(\alpha,1,0):=\mathcal{S}^{*}_{\Sigma}(\alpha)$ of
bi-starlike functions of order $\alpha$. \vspace{0.05in}

(iv) For $\mu=1$, we obtain the well-known class $\mathfrak{B}_{\Sigma}%
^{1}(\alpha,\lambda,\delta):=\mathfrak{B}_{\Sigma}(\alpha,\lambda,\delta)$ of
bi-univalent functions.
\end{remark}


\section{\bigskip Main Results}

We begin this section by defining the class $\mathfrak{B}_{\Sigma}^{\mu
}(\lambda,\delta)$ as follows:

\begin{definition}
\label{def221} For $\lambda\geq1,$ $\mu\geq0$ and $\delta\geq0$, a function
$f\in\Sigma$ given by (\ref{ieq1}) is said to be in the class $\mathfrak{B}%
_{\Sigma}^{\mu}(\lambda,\delta)$ if the following subordinations are
satisfied:%
\[
(1-\lambda)\left(  \frac{f(z)}{z}\right)  ^{\mu}+\lambda f^{\prime}(z)\left(
\frac{f(z)}{z}\right)  ^{\mu-1}+\xi\delta zf^{\prime\prime}(z)\prec
A_{\left\{  L_{p,q,k}(x)\right\}  }(z)-1
\]
and%
\[
(1-\lambda)\left(  \frac{f^{-1}(w)}{w}\right)  ^{\mu}+\lambda\left(
f^{-1}(w)\right)  ^{\prime}\left(  \frac{f^{-1}(w)}{w}\right)  ^{\mu-1}%
+\xi\delta z\left(  f^{-1}(w)\right)  ^{\prime\prime}\prec A_{\left\{
L_{p,q,k}(x)\right\}  }(w)-1,
\]
where $f^{-1}$is given by (\ref{ieq2}).
\end{definition}

\begin{theorem}
\bigskip\label{thm111} For $\lambda\geq1,$ $\mu\geq0$ and $\delta\geq0$, let
$f\in\mathcal{A}$ belongs to the class $\mathfrak{B}_{\Sigma}^{\mu}%
(\lambda,\delta).$ Then%
\[
\left\vert a_{2}\right\vert \leq\frac{2\left\vert p(x)\right\vert
\sqrt{\left\vert p(x)\right\vert }}{\sqrt{\left\vert  \left(
\mu+2\lambda\right)  \left[  1+\mu+\frac{12\delta}{2\lambda+1}\right]
p^{2}(x)-2\left(  \mu+\lambda+2\xi\delta\right)  ^{2}\left(  p^{2}%
(x)+2q(x)\right)  \right\vert }},
\]
and%
\[
\left\vert a_{3}\right\vert \leq\frac{p^{2}(x)}{\left(  \mu+\lambda+2\xi
\delta\right)  ^{2}}+\frac{\left\vert p(x)\right\vert }{\left(  \mu
+2\lambda+2\xi\delta\right)  }.
\]

\end{theorem}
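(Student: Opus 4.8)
The plan is to translate the two subordination conditions into coefficient identities and then extract bounds from a standard Schwarz-function (or Carathéodory) comparison. First I would write each subordination as a composition with an analytic self-map of the disk: there exist Schwarz functions $u,v$ with $u(0)=v(0)=0$ and $|u(z)|,|v(w)|<1$ such that the left-hand expressions equal $A_{\{L_{p,q,k}(x)\}}(u(z))-1$ and $A_{\{L_{p,q,k}(x)\}}(v(w))-1$ respectively. Expanding the generating function gives $A_{\{L_{p,q,k}(x)\}}(z)-1 = 1 + L_{p,q,1}(x)z + L_{p,q,2}(x)z^2 + \cdots$, so with $u(z)=c_1 z + c_2 z^2+\cdots$ and $v(w)=d_1 w + d_2 w^2+\cdots$ I would record the first two coefficients on the right-hand side as $L_{p,q,1}(x)\,c_1$ and $L_{p,q,1}(x)\,c_2 + L_{p,q,2}(x)\,c_1^2$ (and analogously in $d$), using the explicit values $L_{p,q,1}(x)=p(x)$ and $L_{p,q,2}(x)=p^2(x)+2q(x)$ from (\ref{w1}).

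Next I would compute the Taylor coefficients of the left-hand operator applied to $f$. Using $f(z)=z+a_2 z^2+a_3 z^3+\cdots$, I would expand $(f(z)/z)^\mu$, $f'(z)(f(z)/z)^{\mu-1}$, and $zf''(z)$ to order $z^3$, then combine them with weights $(1-\lambda)$, $\lambda$, and $\xi\delta$. Matching the $z$- and $z^2$-coefficients to the right-hand side yields two equations; doing the same with $g=f^{-1}$ (whose coefficients are given by (\ref{ieq2})) yields two more. The $z$-equations give
\[
(\mu+\lambda+2\xi\delta)\,a_2 = p(x)\,c_1,\qquad -(\mu+\lambda+2\xi\delta)\,a_2 = p(x)\,d_1,
\]
so $c_1=-d_1$; the $z^2$-equations, after using $b_3 = 2a_2^2-a_3$ for the inverse and adding them, isolate $a_2^2$ in terms of $c_1^2+d_1^2$ and $c_2+d_2$. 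The clean cancellation of the $a_3$-terms when adding the two second-order relations is what produces a single equation for $a_2^2$, and substituting $c_1^2=d_1^2$ together with the standard coefficient bounds $|c_1|,|c_2|,|d_2|\le 1$ gives the stated estimate for $|a_2|$.

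For $|a_3|$ I would instead subtract the two $z^2$-equations. Since the $a_2^2$-contributions coincide (by $c_1^2=d_1^2$), the difference yields a clean expression for $a_3 - a_2^2$, which combined with the earlier relation for $a_2^2$ produces $a_3$ as a linear combination of $c_1^2$ (or $d_1^2$) and $c_2-d_2$. Bounding each Schwarz coefficient by $1$ in modulus and applying the triangle inequality then gives the two-term bound $|a_3|\le p^2(x)/(\mu+\lambda+2\xi\delta)^2 + |p(x)|/(\mu+2\lambda+2\xi\delta)$. The main obstacle I anticipate is the bookkeeping in expanding the nonlinear operator $(1-\lambda)(f/z)^\mu+\lambda f'(f/z)^{\mu-1}+\xi\delta z f''$ to second order: the $(f/z)^\mu$ and $(f/z)^{\mu-1}$ factors must be expanded via the binomial/logarithmic series so that the $\mu$-dependent coefficients emerge correctly, and one must verify that the resulting coefficient of $a_2^2$ in the summed second-order equation matches the quantity $(\mu+2\lambda)\bigl[1+\mu+\tfrac{12\delta}{2\lambda+1}\bigr]$ appearing under the radical, keeping $\xi=\frac{2\lambda+\mu}{2\lambda+1}$ consistent throughout. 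Once this algebraic identification is confirmed, the inequalities follow routinely.
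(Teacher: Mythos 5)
Your proposal follows essentially the same route as the paper's proof: compare coefficients against the Schwarz-function expansions of the two subordinations, use the linear ($z$-coefficient) relations to obtain $c_1=-d_1$, add the two second-order relations so the $a_3$-terms cancel and $a_2^2$ is isolated, and subtract them (combined with the first-order relation) to handle $a_3$. The one step you should make precise is the passage to the stated $|a_2|$ bound: you must eliminate $c_1^2+d_1^2$ by substituting $c_1^2+d_1^2=2\left(\mu+\lambda+2\xi\delta\right)^2a_2^2/p^2(x)$ (from the linear relations) into the summed second-order equation, so that the $L_{p,q,2}(x)$-term moves to the left and joins the coefficient of $a_2^2$, after which only $\left\vert c_2+d_2\right\vert\le 2$ is used --- merely invoking $c_1^2=d_1^2$ together with $\left\vert c_1\right\vert\le 1$, as written, would leave that term on the right-hand side and produce a bound of a different form from the one in the theorem.
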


\begin{proof}
Let $f\in\mathfrak{B}_{\Sigma}^{\mu}(\lambda,\delta).$ From Definition
\ref{def221}, for some analytic functions $\phi,$ $\psi$ such that
$\phi\left(  0\right)  =\psi\left(  0\right)  =0$ and $\left\vert \phi\left(
z\right)  \right\vert <1,$ $\left\vert \psi\left(  w\right)  \right\vert <1$
for all $z,w\in\mathbb{U},$ then we can write%
\begin{align}
& \hspace{-.7in} (1-\lambda)\left(  \frac{f(z)}{z}\right)  ^{\mu}+\lambda f^{\prime
}(z)\left(  \frac{f(z)}{z}\right)  ^{\mu-1}+\xi\delta zf^{\prime\prime
}(z)\label{p1}\\
& \hspace{.7in}  =-1+L_{p,q,0}(x)+L_{p,q,1}(x)\phi\left(  z\right)  +L_{p,q,2}(x)\phi
^{2}\left(  z\right)  +\cdots\nonumber
\end{align}
and%
\begin{align}
& \hspace{-.6in} (1-\lambda)\left(  \frac{f^{-1}(w)}{w}\right)  ^{\mu}+\lambda\left(
f^{-1}(w)\right)  ^{\prime}\left(  \frac{f^{-1}(w)}{w}\right)  ^{\mu-1}%
+\xi\delta z\left(  f^{-1}(w)\right)  ^{\prime\prime}\label{p2}\\
& \hspace{.9in} =-1+L_{p,q,0}(x)+L_{p,q,1}(x)\psi\left(  w\right)  +L_{p,q,2}(x)\psi
^{2}\left(  w\right)  +\cdots.\nonumber
\end{align}

From the equalities (\ref{p1}) and (\ref{p2}), we obtain that%
\begin{align}
& \hspace{-.7in} (1-\lambda)\left(  \frac{f(z)}{z}\right)  ^{\mu}+\lambda f^{\prime
}(z)\left(  \frac{f(z)}{z}\right)  ^{\mu-1}+\xi\delta zf^{\prime\prime
}(z)\label{p3}\\
& \hspace{.7in} =1+L_{p,q,1}(x)r_{1}z+\left[  L_{p,q,1}(x)r_{2}+L_{p,q,2}(x)r_{1}%
^{2}\right]  z^{2}+\cdots\nonumber
\end{align}
and%
\begin{align}
& \hspace{-.6in} (1-\lambda)\left(  \frac{f^{-1}(w)}{w}\right)  ^{\mu}+\lambda\left(
f^{-1}(w)\right)  ^{\prime}\left(  \frac{f^{-1}(w)}{w}\right)  ^{\mu-1}%
+\xi\delta z\left(  f^{-1}(w)\right)  ^{\prime\prime}\label{p4}\\
& \hspace{.9in} =1+L_{p,q,1}(x)s_{1}w+\left[  L_{p,q,1}(x)s_{2}+L_{p,q,2}(x)s_{1}%
^{2}\right]  w^{2}+\cdots.\nonumber
\end{align}

It is fairly well known that if%
\[
\left\vert \phi\left(  z\right)  \right\vert =\left\vert r_{1}z+r_{2}%
z^{2}+r_{3}z^{3}+\cdots\right\vert <1,\ \ (z\in\mathbb{U})
\]
and%
\[
\left\vert \psi\left(  w\right)  \right\vert =\left\vert s_{1}w+s_{2}%
w^{2}+s_{3}w^{3}+\cdots\right\vert <1,\ \ (w\in\mathbb{U}),
\]
then
\begin{equation}
\left\vert r_{k}\right\vert <1 \text{ and } \left\vert s_{k}\right\vert <1\text{ for }k\in%
\mathbb{N}
. \label{w2}%
\end{equation}

Thus, upon comparing the corresponding coefficients in (\ref{p3}) and
(\ref{p4}), we have%
\begin{equation}
\left(  \mu+\lambda+2\xi\delta\right)  a_{2}=L_{p,q,1}(x)r_{1}, \label{p5}%
\end{equation}

\begin{equation}
\left(  \mu+2\lambda\right)  \left[  \left(  \frac{\mu-1}{2}\right)  a_{2}%
^{2}+\left(  1+\frac{6\delta}{2\lambda+1}\right)  a_{3}\right]  =L_{p,q,1}%
(x)r_{2}+L_{p,q,2}(x)r_{1}^{2}, \label{p6}%
\end{equation}

\begin{equation}
-\left(  \mu+\lambda+2\xi\delta\right)  a_{2}=L_{p,q,1}(x)s_{1}, \label{p7}%
\end{equation}
and%
\begin{equation}
\left(  \mu+2\lambda\right)  \left[  \left(  \frac{\mu+3}{2}+\frac{12\delta
}{2\lambda+1}\right)  a_{2}^{2}-\left(  1+\frac{6\delta}{2\lambda+1}\right)
a_{3}\right]  =L_{p,q,1}(x)s_{2}+L_{p,q,2}(x)s_{1}^{2}. \label{p8}%
\end{equation}

It follows from (\ref{p5}) and (\ref{p7}) that%
\begin{equation}
r_{1}=-s_{1} \label{p9}%
\end{equation}
and%
\begin{equation}
2\left(  \mu+\lambda+2\xi\delta\right)  ^{2}a_{2}^{2}=L_{p,q,1}^{2}(x)\left(
r_{1}^{2}+s_{1}^{2}\right)  . \label{p10}%
\end{equation}

If we add (\ref{p6}) and (\ref{p8}), we get%
\begin{equation}
\left(  \mu+2\lambda\right)  \left[  1+\mu+\frac{12\delta}{2\lambda+1}\right]
a_{2}^{2}=L_{p,q,1}(x)\left(  r_{2}+s_{2}\right)  +L_{p,q,2}(x)\left(
r_{1}^{2}+s_{1}^{2}\right)  . \label{p111}%
\end{equation}

Substituting the value of $\left(  r_{1}^{2}+s_{1}^{2}\right)  $ from
(\ref{p10}) in the right hand side of (\ref{p111}), we deduce that%
\begin{align}
&  \left[  \left(  \mu+2\lambda\right)  \left[  1+\mu+\frac{12\delta}%
{2\lambda+1}\right]  L_{p,q,1}^{2}(x)-2\left(  \mu+\lambda+2\xi\delta\right)
^{2}L_{p,q,2}(x)\right]  a_{2}^{2}\label{p11} \nonumber\\
& \hspace{1.2in} =L_{p,q,1}^{3}(x)\left(  r_{2}+s_{2}\right)  .
\end{align}

Moreover computations using (\ref{w1}), (\ref{w2}) and (\ref{p11}), we find
that%
\[
\left\vert a_{2}\right\vert \leq\frac{2\left\vert p(x)\right\vert
\sqrt{\left\vert p(x)\right\vert }}{\sqrt{\left\vert \left(
\mu+2\lambda\right)  \left[  1+\mu+\frac{12\delta}{2\lambda+1}\right]
p^{2}(x)-2\left(  \mu+\lambda+2\xi\delta\right)  ^{2}\left(  p^{2}%
(x)+2q(x)\right)  \right\vert }}.
\]

Moreover, if we subtract (\ref{p8}) from (\ref{p6}), we obtain%
\begin{equation}
2\left(  \mu+2\lambda\right)  \left(  1+\frac{6\delta}{2\lambda+1}\right)
\left(  a_{3}-a_{2}^{2}\right)  =L_{p,q,1}(x)\left(  r_{2}-s_{2}\right)
+L_{p,q,2}(x)\left(  r_{1}^{2}-s_{1}^{2}\right)  . \label{p12}%
\end{equation}

Then, in view of (\ref{p9}) and (\ref{p10}), Eq. (\ref{p12}) becomes%
\[
a_{3}=\frac{L_{p,q,1}^{2}(x)}{2\left(  \mu+\lambda+2\xi\delta\right)  ^{2}%
}\left(  r_{1}^{2}+s_{1}^{2}\right)  +\frac{L_{p,q,1}}{2\left(  \mu
+2\lambda+2\xi\delta\right)  }\left(  r_{2}-s_{2}\right)  .
\]

Thus applying (\ref{w1}), we conclude that%
\[
\left\vert a_{3}\right\vert \leq\frac{p^{2}(x)}{\left(  \mu+\lambda+2\xi
\delta\right)  ^{2}}+\frac{\left\vert p(x)\right\vert }{\left(  \mu
+2\lambda+2\xi\delta\right)  }.
\]

\end{proof}

By setting $\mu=\delta=0$ and $\lambda=1$ in Theorem \ref{thm111}, we obtain
the following consequence.

\begin{corollary}
\bigskip If $f$ belongs to the class $\mathfrak{B}_{\Sigma}(1)\mathcal{=S}%
_{\Sigma}^{\ast}$ of bi-starlike functions$,$ then%
\[
\left\vert a_{2}\right\vert \leq\frac{2\left\vert p(x)\right\vert
\sqrt{\left\vert p(x)\right\vert }}{\sqrt{\left\vert  2p^{2}%
(x)-2\left(  p^{2}(x)+2q(x)\right)  \right\vert }},
\]
and%
\[
\left\vert a_{3}\right\vert \leq p^{2}(x)+\left\vert p(x)\right\vert .
\]

\end{corollary}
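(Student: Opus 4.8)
The plan is to use the standard machinery for bi-univalent coefficient problems, expanding each side of the two subordination conditions into power series and matching coefficients at orders $z$ and $z^2$ (and the corresponding orders in $w$).

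First I would compute the left-hand side of the defining functional
\[
F(z) := (1-\lambda)\left(\frac{f(z)}{z}\right)^{\mu}+\lambda f^{\prime}(z)\left(\frac{f(z)}{z}\right)^{\mu-1}+\xi\delta z f^{\prime\prime}(z)
\]
as a Taylor series in $z$, using the expansion $f(z)=z+a_2z^2+a_3z^3+\cdots$. The binomial-type expansions of $(f(z)/z)^{\mu}$ and $(f(z)/z)^{\mu-1}$ contribute the factors $\tfrac{\mu-1}{2}$ and the combinations $\mu+\lambda$, $\mu+2\lambda$ seen in \eqref{p5}--\eqref{p6}; the term $\xi\delta z f^{\prime\prime}(z)$ supplies the $\xi\delta$-contributions (recall $\xi=\tfrac{2\lambda+\mu}{2\lambda+1}$, which is what turns $6\xi\delta$ into $\tfrac{6\delta(\mu+2\lambda)}{2\lambda+1}$). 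I would carry this to order $z^2$, obtaining the coefficient of $z$ as $(\mu+\lambda+2\xi\delta)a_2$ and the coefficient of $z^2$ as the bracketed expression in \eqref{p6}. I would then do the identical computation for $g=f^{-1}$ using \eqref{ieq2}, which yields \eqref{p7} and \eqref{p8}; the sign flip $g_2=-a_2$ is the source of \eqref{p9}.

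Simultaneously I would expand the right-hand sides. Writing the Schwarz functions $\phi(z)=r_1z+r_2z^2+\cdots$ and $\psi(w)=s_1w+s_2w^2+\cdots$, I substitute into
\[
-1+L_{p,q,0}(x)+L_{p,q,1}(x)\phi(z)+L_{p,q,2}(x)\phi^2(z)+\cdots,
\]
and collect powers of $z$ to get \eqref{p3}, and analogously \eqref{p4}. Matching coefficients then produces the four relations \eqref{p5}--\eqref{p8}. From \eqref{p5} and \eqref{p7} I obtain \eqref{p9} and, by squaring and adding, \eqref{p10}. Adding \eqref{p6} and \eqref{p8} eliminates $a_3$ and gives \eqref{p111}; substituting \eqref{p10} into its right-hand side clears the $(r_1^2+s_1^2)$ term in favor of $a_2^2$, producing \eqref{p11}. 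Solving for $a_2^2$ and applying $L_{p,q,1}=p(x)$, $L_{p,q,2}=p^2(x)+2q(x)$ together with $|r_2|,|s_2|<1$ from \eqref{w2} and the triangle inequality gives the stated bound on $|a_2|$.

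For $|a_3|$ I would subtract \eqref{p8} from \eqref{p6} to isolate $a_3-a_2^2$, obtaining \eqref{p12}; using $r_1^2=s_1^2$ from \eqref{p9} kills the $L_{p,q,2}$ term, and substituting the value of $a_2^2$ from \eqref{p10} yields the closed form for $a_3$ displayed just before the final estimate. Bounding each piece via $|r_k|,|s_k|<1$ then gives the claimed inequality for $|a_3|$. The main obstacle I anticipate is purely bookkeeping rather than conceptual: correctly carrying the fractional-power binomial expansions to second order while keeping track of the $\xi\delta$ terms, since a single arithmetic slip in the coefficients $\tfrac{\mu-1}{2}$, $\tfrac{\mu+3}{2}$, or the $\tfrac{12\delta}{2\lambda+1}$ factor would propagate into every subsequent estimate. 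I would double-check these by verifying the degenerate case $\mu=\delta=0$, $\lambda=1$ reduces to the known bi-starlike bounds recorded in the Corollary.
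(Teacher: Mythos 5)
Your proposal is correct and follows essentially the same route as the paper: the corollary is obtained by specializing Theorem \ref{thm111} at $\mu=\delta=0$, $\lambda=1$, and the theorem itself is proved exactly by the coefficient-matching scheme you describe (equations (\ref{p5})--(\ref{p12}), then the bounds $|r_k|,|s_k|<1$ with $L_{p,q,1}=p(x)$ and $L_{p,q,2}=p^{2}(x)+2q(x)$). One small remark: your suggested sanity check would actually reveal that the specialization of the theorem gives the sharper estimate $|a_{3}|\leq p^{2}(x)+\tfrac{1}{2}|p(x)|$ (since $\mu+2\lambda+2\xi\delta=2$ here), so the bound recorded in the corollary is correct but not the tightest consequence of the theorem.
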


\section{\bigskip Fekete--Szeg\"{o} problem for the function class
$\mathfrak{B}_{\Sigma}^{\mu}(\lambda,\delta)$}

In this section, we aim to provide Fekete--Szeg\"{o} inequalities for
functions in the class $\mathfrak{B}_{\Sigma}^{\mu}(\lambda,\delta)$. These
inequalities are given in the following theorem.

\begin{theorem}
\label{thm1} For $\lambda\geq1,$ $\mu\geq0$ and $\delta\geq0$, let
$f\in\mathcal{A}$ belongs to the class $\mathfrak{B}_{\Sigma}^{\mu}%
(\lambda,\delta).$ Then

$\hspace{1in} \left\vert a_{3}-\upsilon a_{2}^{2}\right\vert \leq \left\{
\begin{array}
[c]{c}%
\frac{\left\vert p(x)\right\vert }{\left(  \mu+2\lambda+2\xi\delta\right)  },\\ \\
\frac{2\left\vert p(x)\right\vert ^{3}\left\vert 1-\upsilon\right\vert
}{\left\vert p(x) \Upsilon(x) \right\vert },%
\end{array}
\right.
\begin{array}
[c]{c}%
\left\vert \upsilon-1\right\vert \leq\frac{1}{2\left(  \mu+2\lambda+2\xi
\delta\right)  }\times\left\vert
\Upsilon(x)
\right\vert \\ \\
\left\vert \upsilon-1\right\vert \geq\frac{1}{2\left(  \mu+2\lambda+2\xi
\delta\right)  }\times\left\vert
\Upsilon(x)
\right\vert ,
\end{array}
$

\hspace{-.25in} where $\Upsilon(x)=\left(  \mu+2\lambda\right)  \left[  1+\mu+\frac{12\delta}{2\lambda+1}\right]
p(x)-2\left(  \mu+\lambda+2\xi\delta\right)  ^{2}\frac{p^{2}(x)+2q(x)}{p(x)}$.
\end{theorem}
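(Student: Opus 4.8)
The plan is to build directly on the coefficient relations already derived in the proof of Theorem~\ref{thm111}, since the Fekete--Szeg\"o functional $a_3-\upsilon a_2^2$ is a linear combination of $a_2^2$ and $a_3$, both of which have already been expressed in terms of the Lucas-polynomial coefficients and the Schwarz coefficients $r_k,s_k$. The key is equation (\ref{p11}), which isolates $a_2^2$ as a multiple of $(r_2+s_2)$, together with the subtraction relation (\ref{p12}) that expresses $a_3-a_2^2$ in terms of $(r_2-s_2)$ (after using $r_1=-s_1$ from (\ref{p9})). I would solve (\ref{p11}) for $a_2^2$ and substitute into the identity $a_3-\upsilon a_2^2 = (a_3-a_2^2)+(1-\upsilon)a_2^2$, so that the whole functional becomes a linear expression in $r_2$ and $s_2$.

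First I would write
\[
a_3-\upsilon a_2^2 = \frac{L_{p,q,1}(x)}{2\left(\mu+2\lambda+2\xi\delta\right)}\left(r_2-s_2\right)+(1-\upsilon)a_2^2,
\]
using (\ref{p12}) with (\ref{p9}), (\ref{p10}) to convert the $(r_1^2-s_1^2)$ term to zero. Next, substituting $a_2^2 = \dfrac{L_{p,q,1}^3(x)}{\left(\mu+2\lambda\right)\left[1+\mu+\frac{12\delta}{2\lambda+1}\right]L_{p,q,1}^2(x)-2\left(\mu+\lambda+2\xi\delta\right)^2 L_{p,q,2}(x)}\,(r_2+s_2)$ from (\ref{p11}), I would collect the coefficients of $r_2$ and $s_2$ separately. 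Writing $a_3-\upsilon a_2^2 = \left(h(\upsilon)+\tfrac{1}{2}B\right)r_2+\left(h(\upsilon)-\tfrac{1}{2}B\right)s_2$, where $B=\dfrac{L_{p,q,1}(x)}{\mu+2\lambda+2\xi\delta}$ and $h(\upsilon)$ is the $(1-\upsilon)$-dependent coefficient coming from $a_2^2$, the problem reduces to bounding $|h(\upsilon)|$ against $\tfrac{1}{2}B$. Using $|r_2|\le 1$, $|s_2|\le 1$ from (\ref{w2}) and the explicit values (\ref{w1}) $L_{p,q,1}=p(x)$, $L_{p,q,2}=p^2(x)+2q(x)$, the denominator in $h(\upsilon)$ becomes precisely $p(x)\,\Upsilon(x)$, which is how the quantity $\Upsilon(x)$ in the statement arises.

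The standard Fekete--Szeg\"o dichotomy then follows: if $|h(\upsilon)|\le \tfrac12 B$ the triangle inequality gives $|a_3-\upsilon a_2^2|\le 2\cdot\tfrac12 B = B = \frac{|p(x)|}{\mu+2\lambda+2\xi\delta}$, which is the first branch; if $|h(\upsilon)|\ge \tfrac12 B$ the bound is $2|h(\upsilon)| = \frac{2|p(x)|^3|1-\upsilon|}{|p(x)\Upsilon(x)|}$, the second branch. The crossover condition $|h(\upsilon)|=\tfrac12 B$ translates exactly into $|\upsilon-1|=\frac{1}{2(\mu+2\lambda+2\xi\delta)}|\Upsilon(x)|$, matching the case split in the theorem. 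I expect the only delicate point to be the bookkeeping of the two branch coefficients $h(\upsilon)\pm\tfrac12 B$: one must verify that in each regime the two quantities $|h(\upsilon)+\tfrac12 B|$ and $|h(\upsilon)-\tfrac12 B|$ sum to the claimed bound, which hinges on $h(\upsilon)$ and $\tfrac12 B$ having a consistent sign relationship so that the triangle inequality is sharp. Once the substitution from (\ref{p11}) is made cleanly and $\Upsilon(x)$ is identified, the rest is a routine application of the Schwarz-coefficient bounds.
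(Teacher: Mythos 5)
Your proposal is correct and follows essentially the same route as the paper: decompose $a_{3}-\upsilon a_{2}^{2}=(a_{3}-a_{2}^{2})+(1-\upsilon)a_{2}^{2}$, use (\ref{p12}) with $r_{1}=-s_{1}$ for the first term and (\ref{p11}) for the second, collect the coefficients of $r_{2}$ and $s_{2}$ into the form $(h\pm\tfrac{1}{2}B)$ (the paper's $L_{p,q,1}(x)\varphi(\upsilon,x)\pm\tfrac{L_{p,q,1}(x)}{2(\mu+2\lambda+2\xi\delta)}$), and apply the standard two-case estimate with $|r_{2}|,|s_{2}|<1$. Your identification of the denominator as $p(x)\Upsilon(x)$ via (\ref{w1}) is exactly how the paper obtains the stated bounds.
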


\begin{proof}
From (\ref{p11}) and (\ref{p12})%
\begin{align*}
a_{3}-\upsilon a_{2}^{2}  &  =\left(  1-\upsilon\right)  \frac{L_{p,q,1}%
^{3}(x)\left(  r_{2}+s_{2}\right)  }{\left[  \left(  \mu+2\lambda\right)
\left[  1+\mu+\frac{12\delta}{2\lambda+1}\right]  L_{p,q,1}^{2}(x)-2\left(
\mu+\lambda+2\xi\delta\right)  ^{2}L_{p,q,2}(x)\right]  }\\
&  +\frac{L_{p,q,1}}{2\left(  \mu+2\lambda+2\xi\delta\right)  }\left(
r_{2}-s_{2}\right) \\
&  =L_{p,q,1}\left[  \left[  \varphi(\upsilon,x)+\frac{1}{2\left(
\mu+2\lambda+2\xi\delta\right)  }\right]  r_{2}+\left[  \varphi(\upsilon
,x)-\frac{1}{2\left(  \mu+2\lambda+2\xi\delta\right)  }\right]  s_{2}\right]
,
\end{align*}
where%
\[
\varphi(\upsilon,x)=\frac{L_{p,q,1}^{2}(x)\left(  1-\upsilon\right)  }{\left[
\left(  \mu+2\lambda\right)  \left[  1+\mu+\frac{12\delta}{2\lambda+1}\right]
L_{p,q,1}^{2}(x)-2\left(  \mu+\lambda+2\xi\delta\right)  ^{2}L_{p,q,2}%
(x)\right]  },
\]

Then, in view of (\ref{w1}), we conclude that%
\[
\left\vert a_{3}-\upsilon a_{2}^{2}\right\vert \leq\left\{
\begin{array}
[c]{c}%
\frac{\left\vert p(x)\right\vert }{\left(  \mu+2\lambda+2\xi\delta\right)  }\\
2\left\vert p(x)\right\vert \left\vert \varphi(\upsilon,x)\right\vert
\end{array}
\right.
\begin{array}
[c]{c}%
0\leq\left\vert \varphi(\upsilon,x)\right\vert \leq\frac{1}{2\left(
\mu+2\lambda+2\xi\delta\right)  },\\
\left\vert \varphi(\upsilon,x)\right\vert \geq\frac{1}{2\left(  \mu
+2\lambda+2\xi\delta\right)  }.
\end{array}
\]

Which completes the proof of Theorem \ref{thm1}.
\end{proof}

Putting $\mu=\delta=0$ and $\lambda=1$ in Theorem \ref{thm1}, we conclude the
following result:

\begin{corollary}
\bigskip If $f$ belongs to the class $\mathcal{S}_{\Sigma}^{\ast},$ then%
\[
\left\vert a_{3}-\upsilon a_{2}^{2}\right\vert \leq\left\{
\begin{array}
[c]{c}%
\left\vert p(x)\right\vert, \\ \\
\frac{2\left\vert p(x)\right\vert ^{3}\left\vert 1-\upsilon\right\vert
}{4\left\vert q(x)\right\vert },%
\end{array}
\right.
\begin{array}
[c]{c}%
\left\vert \upsilon-1\right\vert \leq\left\vert \frac{q(x)}{p(x)}\right\vert
\\ \\
\left\vert \upsilon-1\right\vert \geq\left\vert \frac{q(x)}{p(x)}\right\vert
\end{array}
.
\]

\end{corollary}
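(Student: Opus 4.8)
The plan is to reduce the Fekete--Szeg\"{o} functional $a_3-\upsilon a_2^2$ to a linear expression in the second coefficients $r_2,s_2$ of the Schwarz functions $\phi,\psi$, and then to estimate it using only the bounds $\left\vert r_2\right\vert<1$ and $\left\vert s_2\right\vert<1$ from \eqref{w2}. Every algebraic ingredient I need is already available from the proof of Theorem \ref{thm111}: inverting \eqref{p11} expresses $a_2^2$ as a multiple of $(r_2+s_2)$, namely
\[
a_2^2=\frac{L_{p,q,1}^3(x)\left(r_2+s_2\right)}{\left(\mu+2\lambda\right)\left[1+\mu+\frac{12\delta}{2\lambda+1}\right]L_{p,q,1}^2(x)-2\left(\mu+\lambda+2\xi\delta\right)^2L_{p,q,2}(x)},
\]
while the simplified form of \eqref{p12} (which uses $r_1=-s_1$, so that $r_1^2-s_1^2=0$) expresses $a_3-a_2^2$ as a multiple of $(r_2-s_2)$, namely $a_3-a_2^2=\frac{L_{p,q,1}(x)}{2\left(\mu+2\lambda+2\xi\delta\right)}\left(r_2-s_2\right)$.

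First I would write $a_3-\upsilon a_2^2=\left(a_3-a_2^2\right)+\left(1-\upsilon\right)a_2^2$ and substitute the two expressions above. Setting $\varphi(\upsilon,x)=\frac{L_{p,q,1}^2(x)\left(1-\upsilon\right)}{\left(\mu+2\lambda\right)\left[1+\mu+\frac{12\delta}{2\lambda+1}\right]L_{p,q,1}^2(x)-2\left(\mu+\lambda+2\xi\delta\right)^2L_{p,q,2}(x)}$ and collecting the coefficients of $r_2$ and $s_2$ puts the functional into the balanced shape
\[
a_3-\upsilon a_2^2=L_{p,q,1}(x)\left[\left(\varphi(\upsilon,x)+\tfrac{1}{2\left(\mu+2\lambda+2\xi\delta\right)}\right)r_2+\left(\varphi(\upsilon,x)-\tfrac{1}{2\left(\mu+2\lambda+2\xi\delta\right)}\right)s_2\right],
\]
which is the canonical form from which the bi-univalent Fekete--Szeg\"{o} estimate is read off.

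Next I would apply the triangle inequality together with $\left\vert r_2\right\vert<1$ and $\left\vert s_2\right\vert<1$. Writing $c=\tfrac{1}{2\left(\mu+2\lambda+2\xi\delta\right)}>0$, this gives $\left\vert a_3-\upsilon a_2^2\right\vert\le\left\vert L_{p,q,1}(x)\right\vert\bigl(\left\vert\varphi+c\right\vert+\left\vert\varphi-c\right\vert\bigr)$, and the whole estimate then rests on the elementary identity $\left\vert\varphi+c\right\vert+\left\vert\varphi-c\right\vert=2\max\left(\left\vert\varphi\right\vert,c\right)$, which is legitimate because $c>0$ and, for real $x$, each of $p(x)$, $q(x)$ and hence $\varphi(\upsilon,x)$ is real. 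This yields the two regimes: the constant bound $\tfrac{\left\vert p(x)\right\vert}{\mu+2\lambda+2\xi\delta}$ when $\left\vert\varphi(\upsilon,x)\right\vert\le c$, and the bound $2\left\vert p(x)\right\vert\left\vert\varphi(\upsilon,x)\right\vert$ when $\left\vert\varphi(\upsilon,x)\right\vert\ge c$.

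The last step is purely computational: substituting $L_{p,q,1}(x)=p(x)$ and $L_{p,q,2}(x)=p^2(x)+2q(x)$ from \eqref{w1} and factoring $p(x)$ out of the denominator of $\varphi$, so that this denominator becomes $p(x)\,\Upsilon(x)$ and $\varphi(\upsilon,x)=\frac{p(x)\left(1-\upsilon\right)}{\Upsilon(x)}$. Feeding this into the second regime turns the bound into $\frac{2\left\vert p(x)\right\vert^3\left\vert1-\upsilon\right\vert}{\left\vert p(x)\Upsilon(x)\right\vert}$ and converts the threshold $\left\vert\varphi(\upsilon,x)\right\vert=c$ into the stated inequality on $\left\vert\upsilon-1\right\vert$ expressed through $\left\vert\Upsilon(x)\right\vert$. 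I expect the only real obstacle to be this bookkeeping---keeping the factor $p(x)$ straight when clearing it from the denominator and matching the branch condition precisely to the claimed form---since, once the functional is in its balanced two-term shape, the inequality itself is forced.
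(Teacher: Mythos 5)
Your argument is essentially the paper's own: the paper obtains this corollary by putting $\mu=\delta=0$ and $\lambda=1$ into Theorem \ref{thm1}, and your derivation --- writing $a_3-\upsilon a_2^2$ in the balanced form $L_{p,q,1}(x)\left[(\varphi+c)r_2+(\varphi-c)s_2\right]$ with $c=\tfrac{1}{2(\mu+2\lambda+2\xi\delta)}$ and invoking $|\varphi+c|+|\varphi-c|=2\max(|\varphi|,c)$ --- is exactly the proof of that theorem. The only step you leave implicit is the actual specialization: you stop at the bound expressed through $\Upsilon(x)$, whereas the corollary needs $\Upsilon(x)$ evaluated at $\mu=\delta=0$, $\lambda=1$, namely $\Upsilon(x)=2p(x)-2\cdot\frac{p^{2}(x)+2q(x)}{p(x)}=-\frac{4q(x)}{p(x)}$, so that $|p(x)\Upsilon(x)|=4|q(x)|$ and the threshold $\tfrac{1}{4}|\Upsilon(x)|$ becomes $|q(x)/p(x)|$; this is a one-line computation, so there is no genuine gap. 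Two minor remarks: your use of the identity $|\varphi+c|+|\varphi-c|=2\max(|\varphi|,c)$ requires $\varphi$ real (hence $\upsilon$ real), an assumption the paper also makes tacitly; and in the first branch your estimate actually gives $|p(x)|/(\mu+2\lambda+2\xi\delta)=|p(x)|/2$ here, which is sharper than the $|p(x)|$ printed in the corollary, so your bound implies (indeed improves) the stated one.
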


Putting $\upsilon=1$ in Theorem \ref{thm1}, we conclude the following result:

\begin{corollary}
\bigskip If $f$ belongs to the class $\mathcal{S}_{\Sigma}^{\ast},$ then%
\[
\left\vert a_{3}- a_{2}^{2}\right\vert \leq\frac{\left\vert
p(x)\right\vert }{\left(  \mu+2\lambda+2\xi\delta\right)  }.
\]

\end{corollary}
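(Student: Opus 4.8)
The plan is to run the standard Fekete--Szeg\"o argument: reduce $a_3-\upsilon a_2^2$ to an honest linear form in the two Schwarz coefficients $r_2,s_2$, and then bound it using only $|r_2|<1$, $|s_2|<1$ from \eqref{w2}. Everything needed is already produced in the proof of Theorem \ref{thm111}. Equation \eqref{p11} gives $a_2^2$ as a constant multiple of $r_2+s_2$, and \eqref{p12}, once $r_1=-s_1$ from \eqref{p9} annihilates the $r_1^2-s_1^2$ term, gives $a_3-a_2^2$ as a constant multiple of $r_2-s_2$.

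First I would split $a_3-\upsilon a_2^2=(a_3-a_2^2)+(1-\upsilon)a_2^2$ and insert these two representations. After abbreviating
\[
\varphi(\upsilon,x)=\frac{(1-\upsilon)L_{p,q,1}^2(x)}{(\mu+2\lambda)\left[1+\mu+\frac{12\delta}{2\lambda+1}\right]L_{p,q,1}^2(x)-2(\mu+\lambda+2\xi\delta)^2L_{p,q,2}(x)},
\]
collecting the coefficients of $r_2$ and $s_2$ rewrites the functional in the symmetric form
\[
a_3-\upsilon a_2^2=L_{p,q,1}(x)\Bigl[\bigl(\varphi(\upsilon,x)+c\bigr)r_2+\bigl(\varphi(\upsilon,x)-c\bigr)s_2\Bigr],\qquad c:=\frac{1}{2(\mu+2\lambda+2\xi\delta)}.
\]

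The finishing step is the triangle inequality together with $|r_2|,|s_2|<1$ and the elementary evaluation $|\varphi+c|+|\varphi-c|=2\max\{|\varphi|,c\}$, which is legitimate because $p,q$ have real coefficients and hence $\varphi(\upsilon,x)$ and $c$ are real. This splits into the two advertised regimes: when $|\varphi(\upsilon,x)|\le c$ the sum of moduli equals $2c$ and, using $L_{p,q,1}(x)=p(x)$ from \eqref{w1}, yields $\frac{|p(x)|}{\mu+2\lambda+2\xi\delta}$; when $|\varphi(\upsilon,x)|\ge c$ it equals $2|\varphi(\upsilon,x)|$ and yields $2|p(x)||\varphi(\upsilon,x)|$. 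I expect the only genuine bookkeeping to be recasting these in terms of $\Upsilon(x)$: factoring $L_{p,q,1}(x)=p(x)$ out of the denominator of $\varphi$ shows that denominator equals $p(x)\,\Upsilon(x)$, so $\varphi(\upsilon,x)=\frac{p(x)(1-\upsilon)}{\Upsilon(x)}$; this turns $2|p(x)||\varphi(\upsilon,x)|$ into $\frac{2|p(x)|^3|1-\upsilon|}{|p(x)\Upsilon(x)|}$ and converts the threshold $|\varphi(\upsilon,x)|\le c$ (respectively $\ge c$) into the stated inequality on $|\upsilon-1|$. The two corollaries then require no new work, following by the specializations $\mu=\delta=0,\ \lambda=1$ and $\upsilon=1$.
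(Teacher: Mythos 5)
Your proposal is correct and follows essentially the same route as the paper: the paper proves Theorem \ref{thm1} by exactly this decomposition of $a_3-\upsilon a_2^2$ into the symmetric form $L_{p,q,1}(x)[(\varphi+c)r_2+(\varphi-c)s_2]$ via \eqref{p11} and \eqref{p12}, bounds it by the same two-case estimate, and obtains this corollary by setting $\upsilon=1$ (so $\varphi=0$ and the first regime applies). Your extra justification of the step $|\varphi+c|+|\varphi-c|=2\max\{|\varphi|,c\}$ only makes explicit what the paper leaves implicit.
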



\end{document}